\newtheorem{theorem}{Theorem}[section]
\newtheorem{lemma}[theorem]{Lemma}
\newtheorem{proposition}[theorem]{Proposition}
\numberwithin{equation}{section}
\def\Cs{\mathscr{C c 1234}}
\def\Cal{\mathcal}
\def\C{{\Cal C}}
\def\S{{\Cal S}}
\def\I{{\Cal I}}
\def\f0{f_0}
\def\Fc0{\varphi_0}
\def\rn{\bbr^n}
\def\I_k {I_{-}^{k/2}}
\def\I+k {I_{+}^{k/2}}
\def\cd{\stackrel{*}{\C}\!{}_{m, k}^\a}
\def\sd{\stackrel{*}{\S}\!{}_{m, k}^\a}
\def\cd0{\stackrel{*}{\C}\!{}_{m, k}^\a}
\def\sd0{\stackrel{*}{\S}\!{}_{m, k}^\a}
\def\ncd0{\stackrel{*}{\Cs}\!{}_{m, k}^\a}
\def\bbr{{\Bbb R}}
\def\bbn{{\Bbb N}}
\def\bbc{{\Bbb C}}
\def\min{{\hbox{\rm min}}}
\def\part{\partial}
\def\Gam{\Gamma}
\def\a{\alpha}
\def\Del{\Delta}
\def\del{\delta}
\def\vp{\varphi}
\def\gam{\gamma}
\def\sig{\sigma}
\def\lam{\lambda}
\def\e{\varepsilon}
\def\min{{\hbox{\rm min}}}
\def\part{\partial}
\def\Gam{\Gamma}
\def\a{\alpha}
\def\sideremark#1{\ifvmode\leavevmode\fi\vadjust{\vbox to0pt{\vss
 \hbox to 0pt{\hskip\hsize\hskip1em
\vbox{\hsize2cm\tiny\raggedright\pretolerance10000
 \noindent #1\hfill}\hss}\vbox to8pt{\vfil}\vss}}}%
\newcommand{\be}{\begin{equation}}
\newcommand{\ee}{\end{equation}}
\newcommand{\bea}{\begin{eqnarray}}
\newcommand{\eea}{\end{eqnarray}}
\newcommand{\Bea}{\begin{eqnarray*}}
\newcommand{\Eea}{\end{eqnarray*}}
\begin{document}

\title[Elementary  Inversion of Riesz Potentials]
{Elementary  Inversion of Riesz Potentials and Radon-John Transforms}

\author{  B. Rubin }
\address{Department of Mathematics, Louisiana State University, Baton Rouge,
Louisiana 70803, USA}
\email{borisr@math.lsu.edu}

\thanks{ The work was
 supported  by the NSF grants PFUND-137 (Louisiana Board of Regents),
 DMS-0556157, and the Hebrew University of Jerusalem.}

\subjclass[2000]{Primary 44A12; Secondary 47G10}



\keywords{Riesz potentials, Radon transforms, Wavelet transforms.}
\begin{abstract}
New simple proofs are given to  some elementary  approximate  and explicit inversion formulae for  Riesz potentials.  The results are applied to   reconstruction of functions from their integrals over
 Euclidean planes in integral geometry.
\end{abstract}

\maketitle

\section{Introduction}\label {hujkz}

The  Riesz potential of
order $\a$ of a sufficiently good function $f$ on $\bbr^n$ is
defined by
\be\label{rpot} (I^\a f)(x)=\frac{1}{\gamma_n(\a)}
\int_{\bbr^n} \frac{f(y)\,dy}{|x-y|^{n-\a}},\quad
  \gamma_n(\a)=  \frac{2^\a\pi^{n/2}\Gamma(\a/2)}{\Gamma((n-\a)/2)},
  \ee
$$Re \, \a >0, \qquad \; \a \neq n, n+2, n+4, \ldots \,.$$
  This  operator can be regarded (in a
certain sense) as a negative power of ``minus-Laplacian", namely,
\be\label{frp} I^\a =(- \Del)^{-\a/2},\qquad
\Del=\frac{\partial^2}{\partial x_1^2}+ \frac{\partial^2}{\partial
x_2^2}+\ldots + \frac{\partial^2}{\partial x_n^2}.\ee
 If $f\in L^p (\bbr^n)$, then the integral (\ref{rpot}) converges a.e. provided that $1\le p<n/Re \, \a$. This  condition is sharp. Explicit and approximate inversion formulae for $I^\a$ are of great importance. We refer to  \cite{He, La, Ru96, Ru04b, Sa02, SKM, St} for the basic properties and applications of Riesz potentials. In this article we restrict  to real $\a \in (0,n)$. This case reflects basic features and is sufficient for  integral geometric applications in Section \ref{333}.

Our main working tools are a suitably chosen auxiliary function $w(x)$ (we call it a {\it reconstructing function}) and its scaled version $w_t(x)=t^{-n}w(x/t), \; t>0$. In the modern literature such a function is called a {\it wavelet} and a convolution $ (f* w_t)(x)$ is called the {\it continuous wavelet transform} of $f$; cf. \cite{Dau, FJW}. We choose
\be\label {pp6g}
w(x)= (-\Del)^m [(1+|x|^2)^{m-(n+\a)/2}], \qquad \tilde w(x)= (-\Del w)(x),\ee
   $m \in \bbn$, and keep this notation throughout the paper.

  The main results of the article are presented by Theorem A below and Theorem B in Section \ref{333}. The latter gives an example of application of Theorem A
 to elementary inversion of the $k$-plane Radon-John transform in integral geometry and falls into the general scope of the convolution-backprojection method (cf. \cite{Na, Ru04a}). Radon transforms and their generalizations are the basic tools in tomography and numerous related areas of pure and applied mathematics; see \cite {Ag, GGG, Go, He, Ka1, Ka2,  Ka3, Ku, Mar, Na,  RK, Q06} and references therein.
 
\noindent {\bf Theorem A.} {\it  Let $2m>\a$, $f\in L^p (\bbr^n)$, $1\le p<n/\a$. Then
\bea \label {8rfh} c_{\a,m}\, f&=&\lim\limits_{t \to 0} \,(I^\a f  \ast t^{-\a} w_t), \qquad c_{\a,m}=\gamma_n(2m-\a),\\
  \label {t8rfh} d_{\a,m}\, f&=&\int_0^\infty \frac{I^\a f  \ast  \tilde w_t}{t^{1+\a}}\, dt, \qquad d_{\a,m}=
 (2m\!-\!\a)\,c_{\a,m},\eea
 where $\int_0^\infty (\ldots) =\lim\limits_{\e \to 0}\int_\e^\infty (\ldots)$ and $\gamma_n(\cdot)$ has the same meaning as in (\ref{rpot}).
The limit in both formulae exists in the $L^p$-norm and in the a.e. sense. If, moreover, $f\in C_0(\bbr^n)$, then it exists in the $\sup$-norm.}

We recall that $C_0(\bbr^n)$ denotes the space of continuous functions on $\bbr^n$ vanishing at infinity.

Some comments are in order. Statements of this kind are not new. Regarding (\ref{8rfh}), we observe that approximate inversion  of operators of  the potential type was initiated by Zavolzhenskii,  Nogin and Samko \cite {ZN, NS, Sa98, Sa99, Sa02}. Conceptually this approach is close to the convolution-backprojection method in tomography \cite {Na}. An elegant formula (\ref{8rfh})  is due to Samko \cite[p. 325]{Sa02}, \cite{Sa98, Sa99}; see also \cite {RS}. Below we give a new simple proof of it.

Regarding (\ref{t8rfh}),  wavelet-like inversion formulae for Riesz potentials are also well-known; see, e.g., \cite[Section 17]{Ru96}. Our aim is to show that elementary reconstructing functions (\ref{pp6g}) work perfectly in the wavelet inversion scheme and the relevant  justification is much  simpler  than  in the general theory; cf.  \cite{Ru96}. The case $\a=0$ in (\ref{t8rfh}), when $I^\a$ is substituted by the identity operator, represents a variant of Calder\'on's reproducing formula \cite{Dau, FJW}.

The validity of formulae (\ref{8rfh}) and  (\ref{t8rfh}) can be formally
explained in the language of the Fourier transform. We have
 \be \lim\limits_{t \to 0} \,(I^\a f  \ast t^{-\a} w_t)^\wedge (\xi)=\hat f (\xi)\,  \lim\limits_{t \to 0} \,(t|\xi|)^{-\a} \hat w (t\xi)=c_w\,\hat f (\xi)\ee
 provided that the limit $c_w= \lim\limits_{\xi\to 0} \,|\xi|^{-\a} \hat w (\xi)$ exists. Moreover, since $\tilde w$ in (\ref{pp6g}) is a radial function, we easily get
  \be\label{psss} \Big[ \int_0^\infty
\frac {I^\a f \ast \tilde w_t}{t^{1+\a}} \,dt \bigg]^\wedge (\xi) = d_w
\hat f (\xi),\qquad d_w =\frac{1}{\sig_{n-1}}\int_{\bbr^n} \frac {\hat w
(y)\, dy}{|y|^{n+ \a-2}},\ee
where  $\sig_{n-1}\!= \! 2\pi^{n/2} \big/ \Gamma (n/2)$ is the surface area of the unit sphere $|x|\!=\!1$.

This simple argument does not work for arbitrary $L^p$-functions. To cover this case, we develop another technique and give the final answer without using the Fourier transform.

\section{Preliminaries} We recall some known facts. The Fourier transform of a function
$f \in L^1 = L^1 (\bbr^n)$ is defined by \be \label{ft}\hat f (\xi) = \int_{\bbr^{ n}} f(x) e^{ i x \cdot \xi} \,dx, \qquad x \cdot \xi = x_1 \xi_1 + \ldots + x_n\xi_n.
\ee
Let $S=S (\bbr^n)$ be the Schwartz space  of all $C^\infty$ functions which, together with derivatives of all orders, vanish at infinity faster than any inverse power of $|x|=(x_1^2 +\ldots +x_n^2)^{1/2}$. We endow $S(\bbr^n)$ with a standard topology  generated by the sequence of norms $$
||\vp||_m=\max\limits_x(1+|x|)^m \sum_{|j|\le m} |(\part^j\vp)(x)|,
\quad m=0,1,2, \ldots \,.$$

The following spaces adapted to Riesz potentials were
introduced by  Semyanistyi \cite{Se}; further generalizations due to  Lizorkin  and  Samko  can be found in  \cite{Liz, Sa02}.

Let  $\Psi=\Psi(\bbr^n)$ be the subspace of $S$, consisting of functions $\psi (\xi)$ vanishing  at  $\xi=0$ with all  derivatives, and let $\Phi=\Phi(\bbr^n)$ be the Fourier image of $\Psi$. We
equip  $\Phi$ with the induced topology of  $S$.
Then $\Phi$ becomes a  topological vector space which is isomorphic
to $\Psi$ under the action of the Fourier transform. We denote by $\Phi'$ the space of distributions over $\Phi$.

The main reason for introducing the spaces $\Psi$ and $\Phi$ is that $\Psi$ is invariant under multiplication by $|\xi|^\a$ for any $\a\in \bbc$ and therefore, the Riesz potential $I^\a$ is an automorphism
of $\Phi$ and $\Phi'$.
\begin{proposition}\label{c3.9} \cite[p. 21 ]{Ru96} Let $f\in L^r$, $1\le r<\infty$,
and $g\in L^p$, $1\le p<\infty$. If $f=g$ in the $\Phi'$-sense, then
$f=g$ almost everywhere.
 \end{proposition}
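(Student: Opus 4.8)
The plan is to set $h=f-g$ and prove that $h=0$ almost everywhere. Since $1\le r,p<\infty$, both $f\in L^r$ and $g\in L^p$ are tempered distributions, hence so is $h$; moreover $h$ is locally integrable because $L^r,L^p\subset L^1_{loc}$. The hypothesis that $f=g$ in the $\Phi'$-sense means precisely that $\lng h,\vp\rng=\int_{\rn} h\,\vp\,dx=0$ for every $\vp\in\Phi$, each pairing being finite because $\Phi\subset S\subset L^q$ for all $q\in[1,\infty]$. The goal is therefore to show that a tempered distribution annihilating $\Phi$ which happens to be a difference of an $L^r$-function and an $L^p$-function must vanish.

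Second, I would identify the annihilator of $\Phi$ inside $S'$ with the polynomials. Passing to the Fourier transform converts the condition $\lng h,\vp\rng=0$ for all $\vp\in\Phi=\mathcal F(\Psi)$ into $\lng\hat h,\psi\rng=0$ for all $\psi\in\Psi$, via $\lng h,\hat\psi\rng=\lng\hat h,\psi\rng$. Since every $\psi\in C_c^\infty(\rn\setminus\{0\})$ vanishes in a neighborhood of the origin and hence lies in $\Psi$, this forces $\supp\hat h\subseteq\{0\}$. By the structure theorem for distributions supported at a single point, $\hat h=\sum_{|\b|\le N}c_\b\,\partial^\b\delta_0$ for some $N$ and constants $c_\b$, and taking the inverse Fourier transform shows that $h$ agrees almost everywhere with a polynomial $P$.

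Third, I would show that the integrability of $f$ and $g$ forces $P\equiv 0$. Hölder's inequality on the ball $B_R=\{|x|\le R\}$ gives $\|f\|_{L^1(B_R)}\le C\|f\|_r\,R^{n(1-1/r)}$ and the analogous bound for $g$, so, writing $Q=\max(r,p)$, for $R\ge 1$
\be
\int_{B_R}|P|\,dx\le \|f\|_{L^1(B_R)}+\|g\|_{L^1(B_R)}\le C'\,R^{n(1-1/Q)},\qquad n(1-1/Q)<n.
\ee
On the other hand, if $P\not\equiv 0$ has degree $d\ge 0$ with nonzero top-degree homogeneous part $P_d$, then in polar coordinates $\int_{B_R}|P_d|\,dx=c_d\,R^{n+d}/(n+d)$ with $c_d=\int_{S^{n-1}}|P_d(\theta)|\,d\sigma(\theta)>0$, while the lower-order terms contribute $O(R^{n+d-1})$; hence $\int_{B_R}|P|\,dx\ge c\,R^{n+d}$ for large $R$. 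Comparing the two estimates, $R^{n+d}\lesssim R^{n(1-1/Q)}$ as $R\to\infty$ is impossible because $n+d\ge n>n(1-1/Q)$. Thus $P\equiv 0$ and $f=g$ almost everywhere.

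The main obstacle is this last step, not the (standard) identification of the annihilator of $\Phi$ with polynomials. Because $f$ and $g$ may live in genuinely different spaces $L^r$ and $L^p$, their difference need not belong to any single $L^q$, so one cannot simply invoke ``a polynomial in $L^q$ is zero.'' The fix is to measure integrability locally, on balls, where Hölder applies with the larger exponent $Q$ and the resulting growth $R^{n(1-1/Q)}$ is strictly slower than the unavoidable polynomial growth $R^{n+d}$; this is exactly where the finiteness of $r$ and $p$ is used.
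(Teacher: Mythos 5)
Your proof is correct: the reduction of $h=f-g$ to a tempered distribution with $\supp \hat h\subseteq\{0\}$, the structure theorem identifying $h$ with a polynomial a.e., and the local H\"older estimate $\int_{B_R}|h|\le C'R^{n(1-1/Q)}$ versus the growth $cR^{n+d}$ of a nonzero polynomial are all sound, and the finiteness of $r$ and $p$ is used exactly where it must be. The paper itself offers no proof of this proposition---it is quoted from Rubin's book (p.~21 of the cited monograph)---and the argument given there is essentially the one you reconstructed, so there is nothing to correct and no divergence of method to report.
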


\section{Proof of Theorem A}
The following  auxiliary propositions form the heart of the proof.

\begin{lemma} \label {00jy} Let  $w\!\in\! L^1(\bbr^n)$ be such that  $h=I^\a w$ has an integrable decreasing radial majorant.
If $f\!\in \!L^p(\bbr^n)$, $ 1\!\le \!p\!<\!n/\a$,
then \be \label{09k} \lim\limits_{t \to 0} \,(I^\a f  \ast t^{-\a} w_t)=c_w (\a)\, f, \qquad c_w (\a)=\int_{\bbr^n} h(x)\, dx,\ee
where the limit exists in the $L^p$-norm and in the a.e. sense. If, moreover, $f\in C_0(\bbr^n)$, then the limit exists in the $\sup$-norm.
\end{lemma}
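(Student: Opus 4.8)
The plan is to recognize the Riesz potential as a convolution operator and to reduce the entire assertion to the classical theory of approximate identities. Write $I^\a f = f\ast k_\a$, where $k_\a(x)=|x|^{\a-n}/\gamma_n(\a)$ is the Riesz kernel, so that in particular $h=I^\a w = k_\a \ast w$. The key is the reduction identity
\[
(I^\a f \ast t^{-\a} w_t)(x) = (f\ast h_t)(x), \qquad h_t(x)=t^{-n}h(x/t),
\]
which converts the left-hand side, built from the singular operator $I^\a$, into a plain convolution of $f$ with the standard mass-preserving $L^1$-dilation of $h$, at which point the conclusion becomes standard.

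The first step is a pure homogeneity computation at the kernel level. Since $k_\a$ is homogeneous of degree $\a-n$, the substitution $y=tz$ gives $(I^\a w_t)(x)=t^{\a-n}(I^\a w)(x/t)=t^{\a-n}h(x/t)=t^\a h_t(x)$, hence $t^{-\a}I^\a w_t=h_t$, equivalently $k_\a\ast(t^{-\a}w_t)=h_t$. The second step is to promote this to the reduction identity for $f\in L^p$ by Fubini: for a.e.\ $x$,
\[
(I^\a f \ast t^{-\a} w_t)(x)=\int\Big[\int f(x-u-v)\,k_\a(v)\,dv\Big]\,t^{-\a}w_t(u)\,du,
\]
and the interchange of the two integrations is licit because the double integral of the absolute value equals, up to the constant $\gamma_n(\a)$, the value $(I^\a|f|\ast t^{-\a}|w_t|)(x)$, which is finite for a.e.\ $x$. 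Carrying out the inner integration and invoking the kernel identity then yields $f\ast(k_\a\ast t^{-\a}w_t)=f\ast h_t$. For $1<p<n/\a$ the finiteness is immediate from $I^\a|f|\in L^q$, $1/q=1/p-\a/n$, by Hardy--Littlewood--Sobolev together with Young's inequality; the endpoint $p=1$ (where only weak-type bounds hold) can be secured by first proving the identity on a dense nice subclass and then passing to the limit, using Proposition \ref{c3.9} to identify the limits almost everywhere.

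It remains to apply the theory of approximate identities to $f\ast h_t$. By hypothesis $h\in L^1(\bbr^n)$ has an integrable decreasing radial majorant and $\int_{\bbr^n}h_t=\int_{\bbr^n}h=c_w(\a)$ for every $t$, so $\{h_t\}_{t>0}$ is an approximate identity of total mass $c_w(\a)$. Convergence $f\ast h_t\to c_w(\a)f$ in the $L^p$-norm ($1\le p<\infty$), and uniformly when $f\in C_0(\bbr^n)$, follows from continuity of translation together with the $L^1$-normalization. For the a.e.\ statement the radial decreasing majorant is precisely what is needed: it yields the pointwise domination $\sup_{t>0}|(f\ast h_t)(x)|\le C\,(Mf)(x)$ by the Hardy--Littlewood maximal function $Mf$, and this maximal inequality combined with a.e.\ convergence on the dense subclass $C_c(\bbr^n)$ gives a.e.\ convergence for every $f\in L^p$.

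I expect the main obstacle to lie in the passage from the clean formal/Fourier heuristic to genuine $L^p$ functions, and it is concentrated in two places: justifying the interchange of integrals in the reduction step despite the singularity of $k_\a$ and the mere local integrability of $I^\a f$, especially at $p=1$; and obtaining a.e.\ convergence, which does not follow from $L^p$ convergence. Both difficulties are resolved by the integrable radial majorant hypothesis---the first through the Hardy--Littlewood--Sobolev finiteness of $I^\a|f|$, the second through the maximal-function domination---so that hypothesis, rather than any delicate individual estimate, is the true crux of the argument.
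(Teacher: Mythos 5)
Your proposal is correct and follows essentially the same route as the paper's own proof: the homogeneity identity $t^{-\a}I^\a w_t=h_t$, the Fubini interchange justified by the a.e.\ finiteness of $(I^\a|f|)\ast|w_t|$, and then the standard approximate-identity theorem (Stein) for $L^p$-, a.e.- and uniform convergence. You merely spell out the details (Hardy--Littlewood--Sobolev, maximal-function domination) that the paper compresses into a citation, so no further comment is needed.
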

\begin{proof}  We have $t^{-\a} (I^\a w_t)(x)=t^{-n}h(x/t)=h_t(x)$.  Then
$$
I^\a f  \ast t^{-\a} w_t=f\ast t^{-\a} I^\a w_t=f\ast h_t,
$$
which yields an approximate identity \cite [p. 62] {St}. The above application of Fubini's theorem is permissible because $(I^\a |f|)  \ast |w|<\infty$ a.e. \end{proof}

There are many ways to choose a  reconstructing function $w$. Usually  $w$ or $I^\a w$ or both are expressed analytically in a pretty complicated way. As we shall see below, an advantage of the choice (\ref {pp6g})  is  that  {\it both} $w$ and $h=I^\a w$ are  elementary  and a constant $c_w (\a)$ can be different from zero.
\begin {lemma}\label {761s}  Let
$w(x)= (-\Del)^m [(1+|x|^2)^{m-(n+\a)/2}]$.  If $\a> 0$, then $w\!\in\! L^1(\bbr^n)$. If, moreover,
$\a <\min (n, 2m)$ and $h=I^\a w$, then
\be\label {78f}
h(x)\!=\!a_{\a,m}\, (1\!+\!|x|^2)^{(\a-n)/2-m}, \quad a_{\a,m}\!=\!2^{2m -\a}\, \frac{\Gam ((n\!+\!2m\! -\!\a)/2)}{\Gam ((n\!+\!\a\!-\!2m)/2)},\nonumber\ee
so that $h\in  L^1(\bbr^n)$ and
\be c_{\a,m}\equiv \int_{\bbr^n}h(x)\, dx=
 \frac{2^{2m -\a}\pi^{n/2}\Gam (m\! -\!\a/2)}{\Gam ((n\!+\!\a\!-\!2m)/2)}=\gam_n (2m -\a);\nonumber\ee
 cf. the normalizing constant in (\ref{rpot}).
If $\a$ is not an integer, then $c_{\a,m}\!\neq \!0$ for any $m\!>\!\a/2$. If $\a \!\in \!\{1,2, \ldots, n\!-\!1\}$,
 then $c_{\a,m}\neq 0$ provided, e.g.,  that $m=[(\a+2)/2]$.
\end{lemma}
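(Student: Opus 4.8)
The plan is to reduce the closed form of $h=I^\a w$ to a single Fourier-transform identity in the Lizorkin--Semyanistyi space, and then to read off the two arithmetic claims (the value $c_{\a,m}=\gam_n(2m-\a)$ and its non-vanishing) as elementary Gamma-function facts. First I would settle $w\in L^1(\bbr^n)$ for $\a>0$ by a direct ``ladder'' computation. Applying $\Del$ to $(1+|x|^2)^{\lambda}$ produces, after writing $|x|^2=(1+|x|^2)-1$, a linear combination of $(1+|x|^2)^{\lambda-1}$ and $(1+|x|^2)^{\lambda-2}$; iterating $m$ times shows that $(-\Del)^m(1+|x|^2)^{\lambda}$ is a finite combination of the powers $(1+|x|^2)^{\lambda-m-j}$, $0\le j\le m$. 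With $\lambda=m-(n+\a)/2$ the slowest-decaying term is $(1+|x|^2)^{-(n+\a)/2}$, so $w(x)=O(|x|^{-(n+\a)})$ at infinity and $w\in L^1$ exactly because $\a>0$; the same computation shows $w$ is bounded, hence $w\in L^p$ for every $p\in[1,\infty]$.

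The heart of the matter is the identity $h=a_{\a,m}(1+|x|^2)^{(\a-n)/2-m}$. Write $g(x)=(1+|x|^2)^{m-(n+\a)/2}=(1+|x|^2)^{-\nu}$ with $\nu=(n+\a-2m)/2$, and $h_0(x)=(1+|x|^2)^{(\a-n)/2-m}=(1+|x|^2)^{-\nu'}$ with $\nu'=(n+2m-\a)/2$. Since $w\in L^1$ and $0<\a<n$, the function $I^\a w$ is defined a.e.\ and lies in some $L^q$, $q<\infty$, by Hardy--Littlewood--Sobolev, and in the $\Phi'$-sense its Fourier transform is the locally integrable symbol $|\xi|^{-\a}\hat w(\xi)=|\xi|^{2m-\a}\hat g(\xi)$. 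I would then invoke the classical Macdonald-kernel formula $\widehat{(1+|\cdot|^2)^{-\nu}}(\xi)=c_\nu\,|\xi|^{\nu-n/2}K_{n/2-\nu}(|\xi|)$ for $\xi\neq0$, with $\nu$-dependence $c_\nu\propto 2^{-\nu}/\Gam(\nu)$. The decisive observation is that $n/2-\nu=m-\a/2$ and $n/2-\nu'=\a/2-m$ are negatives of one another, so by the symmetry $K_{-\mu}=K_\mu$ the two Bessel factors coincide, while the exponent count $2m-\a+(\nu-n/2)=m-\a/2=\nu'-n/2$ matches the powers of $|\xi|$. Hence $|\xi|^{2m-\a}\hat g(\xi)=(c_\nu/c_{\nu'})\,\hat{h_0}(\xi)$ for $\xi\neq0$, so $I^\a w$ and $(c_\nu/c_{\nu'})h_0$ agree in the $\Phi'$-sense, and Proposition~\ref{c3.9} upgrades this to a.e.\ equality. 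The ratio $c_\nu/c_{\nu'}=2^{\nu'-\nu}\Gam(\nu')/\Gam(\nu)=2^{2m-\a}\Gam((n+2m-\a)/2)/\Gam((n+\a-2m)/2)$ is precisely $a_{\a,m}$.

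The two arithmetic statements are then routine. Since $h=a_{\a,m}(1+|x|^2)^{-\nu'}$ with $\nu'=(n+2m-\a)/2>n/2$ (because $2m>\a$), the beta integral $\intl_{\bbr^n}(1+|x|^2)^{-\mu}\,dx=\pi^{n/2}\Gam(\mu-n/2)/\Gam(\mu)$ at $\mu=\nu'$ gives $c_{\a,m}=a_{\a,m}\pi^{n/2}\Gam(m-\a/2)/\Gam(\nu')$, and $\Gam(\nu')$ cancels, leaving $c_{\a,m}=2^{2m-\a}\pi^{n/2}\Gam(m-\a/2)/\Gam((n+\a-2m)/2)=\gam_n(2m-\a)$. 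For non-vanishing I would use that $\Gam$ has no zeros and that $\Gam(m-\a/2)$ is finite and positive whenever $m>\a/2$; thus $c_{\a,m}=0$ can occur only when the denominator $\Gam((n+\a-2m)/2)$ has a pole, i.e.\ when $\a=2m-n-2k$ for some integer $k\ge0$. If $\a\notin\bbz$ this never happens, so $c_{\a,m}\neq0$ for all $m>\a/2$; if $\a\in\{1,\dots,n-1\}$ the choice $m=[(\a+2)/2]$ forces $2m-n-\a\le 2-n<0$, excluding every pole, and one checks $m>\a/2$, so again $c_{\a,m}\neq0$.

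The step I expect to be the main obstacle is the rigorous Fourier argument for $L^1$ (rather than Schwartz) data. One must justify that $I^\a w$ carries Fourier transform $|\xi|^{-\a}\hat w$ as a functional on $\Psi$, and, since $\nu=(n+\a-2m)/2$ may be $\le0$ so that $g$ is merely a tempered distribution with $\hat g$ genuinely singular at the origin, one must argue that only the symbols' behaviour away from $\xi=0$ enters the computation. This is exactly where passing through $\Phi$, $\Phi'$ and Proposition~\ref{c3.9} does the work: test functions in $\Psi$ vanish to infinite order at $0$, so the potential distributional mass at $\xi=0$ is invisible, and the Macdonald-kernel identity may be established first for $\nu>0$ and then continued, or read off directly from the regular parts.
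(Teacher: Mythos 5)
Your proposal is correct and follows essentially the same route as the paper: the iterated-Laplacian computation giving $w(x)=O((1+|x|^2)^{-(n+\a)/2})\in L^1$, the Macdonald-kernel Fourier formula combined with the symmetry $K_{-\mu}=K_\mu$ to identify $I^\a w$ with $a_{\a,m}(1+|x|^2)^{(\a-n)/2-m}$ in the $\Phi'$-sense, Proposition~\ref{c3.9} to upgrade this to an a.e.\ identity, the beta integral for $c_{\a,m}=\gam_n(2m-\a)$, and the Gamma-pole analysis for non-vanishing. (One cosmetic slip: for $n=2$, $\a=1$ the chain $2m-n-\a\le 2-n<0$ fails in its last step, but the actual value $2m-n-\a=1-n=-1$ still avoids the poles, so the conclusion stands.)
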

\begin {proof} The first statement can be easily checked by differentiation. For instance, we can write $\Del$ in polar coordinates
to get
$$
w(x)\!=\!(L^m \psi)(|x|^2), \quad L\!=\!4 r^{1-n/2}\, \frac{d}{dr}\, r^{n/2}\,  \frac{d}{dr},   \quad \psi(r)\!=\!(1\!+\!r)^{m-(n+\a)/2}.$$
This gives $w(x)=O ((1+|x|^2)^{-(\a+n)/2})\in  L^1(\bbr^n)$.
To prove the second statement we invoke the Bessel-McDonald kernel
\be G_\a (\xi)=\lam_\a \,\frac{K_{(n-\a)/2} (|\xi|)}{|\xi|^{(n-\a)/2}}, \qquad \lam_\a=\frac{2^{1-(\a+n)/2}}{\pi^{n/2}\Gam (\a/2)},\ee
where $K_\nu (\cdot)$ denotes the modified Bessel function (or the McDonald function) of order $\nu$  with the property \be \label{90k} K_{\nu} (z)= K_{-\nu} (z);\ee see, e.g., \cite {Wat}. It is known that
\be\label{hxi}
G_\a(\xi)\le c_\a\,
\begin{cases} |\xi|^{(\a -n-1)/2}e^{-|\xi|} & \text{if $|\xi|>1$,}
\\
|\xi|^{\a -n} &\text{if $\;|\xi|<1, \; \a<n$,}
\\
1  &\text{if $\; |\xi|<1, \; \a>n$,}
\\
1+\log (1/|\xi|) &\text{if $\; |\xi|<1, \; \a=n$,}
\end{cases}
\ee
where $ c_\a$ is a continuous function of $\a$; see, e.g., \cite [p. 257]{Ru96},  \cite [p. 285]{Nik}. Furthermore, for any
$\a>0$ the Fourier transform of $(1+|x|^2)^{-\a/2}$ in the sense of distributions  is computed by the formula
\be \label {56d}((1+|x|^2)^{-\a/2}, \phi (x))=(2\pi)^{-n} (G_\a(\xi), \hat \phi (\xi)),\qquad \phi \in S (\bbr^n),\ee
 \cite [Section 8.1]{Nik}.
Let us prove that
\be \label {0cbc} (I^\a w,  \phi)=a_{\a,m}\, ((1+|x|^2)^{(\a-2m-n)/2}, \phi (x))\ee
 for any test function $\phi$ in the  space $\Phi (\bbr^n)$. Once this is done, the required pointwise equality  will follow owing to Proposition \ref{c3.9}. We have
 \bea (I^\a w,  \phi)&=&(w,  I^\a\phi)=((1+|x|^2)^{(2m-n-\a)/2}, (-\Del)^m (I^\a\phi)(x))\nonumber\\
 &=&(2\pi)^{-n} (G_{n+\a-2m}(\xi), |\xi|^{2m-\a}\hat \phi (\xi))\nonumber\\
 &=&(2\pi)^{-n} \lam_{n+\a-2m}\, \left ( \frac{K_{m-\a/2} (|\xi|)}{|\xi|^{m-\a/2}},|\xi|^{2m-\a}\hat \phi (\xi)\right).\nonumber\eea
 Using (\ref{90k}) and (\ref {56d}), we  continue:
\bea(I^\a w,  \phi)&=&\frac{(2\pi)^{-n} \lam_{n+\a-2m}}{\lam_{2m+n-\a}}\, (G_{2m+n-\a}(\xi),\hat \phi (\xi))\nonumber\\
 &=&a_{\a,m}\, ((1+|x|^2)^{(\a-2m-n)/2}, \phi (x)).\nonumber\eea
 Evaluation of $c_{\a,m}$ is straightforward. To complete the proof, we note that  $c_{\a,m}\neq 0$ if $(n+\a-2m)/2\neq 0, -1, -2, \ldots \,$. The latter is guaranteed  under the afore-mentioned  choice of $m$.
\end{proof}

\noindent {\bf Proof of Theorem A.} The first statement follows immediately from Lemmas \ref{00jy}  and \ref{761s}. To prove the second  statement, we first note that $\tilde w= -\Del w$ belongs to $L^1$. Hence,  the convolution $I^\a f  \ast t^{-\a} \tilde w_t$ is well-defined in the Lebesgue sense and can be written as $f \ast \tilde h_t$ with $\tilde h=I^\a \tilde w=I^\a (-\Del) w$. Furthermore, for any test function $\phi \in \Phi (\bbr^n)$, (\ref{0cbc}) yields
\bea
(\tilde h,  \phi)&=&(I^\a (-\Del) w,  \phi)=-(I^\a w, \Del\,  \phi)\nonumber\\&=&-a_{\a,m}\, (\Del\,[(1+|x|^2)^{(\a-n)/2 - m}], \phi).\nonumber\eea
Since $\Del\,[(1+|x|^2)^{(\a-n)/2 - m}]=O((1+|x|^2)^{(\a-n)/2 - m-1})\in L^1$, then, by Proposition \ref{c3.9}, we have a pointwise equality
$$\tilde h (x)=-a_{\a,m}\, \Del\,[(1+|x|^2)^{(\a-n)/2 - m}].$$  Denote
$$
J_\e= \int_\e^\infty \frac{I^\a f  \ast  \tilde w_t}{t^{1+\a}}\, dt,\qquad \e>0.
$$
Then
$$
J_\e=\int_\e^\infty \frac{f\ast \tilde h_t}{t}\, dt=f \ast \psi_\e,  \qquad \psi_\e (x)=\int_\e^\infty \frac{ \tilde h_t (x)}{t}\, dt.
$$
Setting $\tilde h (x)=\tilde h_0 (|x|^2)$, we get
$$ \psi_\e (x)=\int_\e^\infty  \frac{\tilde h_0 (|x|^2/t^2)}{t^{n+1}}\, dt=\frac{1}{2\,|x|^n}\, \int_0^{x|^2/\e^2} \tilde h_0 (r)\, r^{n/2 -1}\, dr.
$$
This is a scaled version of the function $\psi (x)\!=\!2^{-1}|x|^{-n}\int_0^{|x|^2} \tilde h_0 (r)\, r^{n/2 -1}\, dr$. Observing that $$ \tilde h_0 (r)=-a_{\a,m}\, L\,[(1+r)^{(\a-n)/2 - m}],\qquad  L\!=\!4 r^{1-n/2}\, \frac{d}{dr}\, r^{n/2}\,  \frac{d}{dr},$$ we have
\bea
 \psi (x)&=&-\frac{2\,a_{\a,m}}{|x|^n}\, \int_0^{x|^2} \frac{d}{dr}\, r^{n/2}\,  \frac{d}{dr}\,[(1+r)^{(\a-n)/2 - m}]\, dr\nonumber \\
 &=&-\frac{2\,a_{\a,m}}{|x|^n}\,\left \{  r^{n/2}\,  \frac{d}{dr}\,[(1+r)^{(\a-n)/2 - m}]\right \}_{r=|x|^2}\nonumber \\
 &=&-a_{\a,m}\,(\a-n-2m)\,(1+|x|^2)^{(\a-n)/2 - m-1}.\nonumber \eea
This gives $\lim\limits_{\e \to 0} \,J_\e=d_{\a,m} f(x),$ where
$$
d_{\a,m}=\!\int_{\bbr^n}\!\!\psi (x)\,dx= \frac{2^{2m -\a+1}\pi^{n/2}\Gam (m\!+\!1\! -\!\a/2)}{\Gam ((n\!+\!\a\!-\!2m)/2)}=(2m\!-\!\a)\,\gamma_n(2m\!-\!\a),
$$
and the limit is understood in the required sense.

\section{Inversion of the Radon-John Transform}\label{333}

We recall  basic definitions. More information can be found in
\cite{ GGG, He, Ru04b}. Let $\ \mathcal{G}_{n,k}$ \ and $
G_{n,k}$ be the affine Grassmann manifold of all non-oriented
$k$-dimensional planes $\tau $ \ in $\mathbb{R}^{n}$
and the
ordinary Grassmann manifold of $k$-dimensional linear subspaces $\zeta $ of $%
\mathbb{R}^{n}$, respectively. Each $k$-plane $\tau \in
\mathcal{G}_{n,k}$ is parameterized as $\tau =\left( \zeta \text{,
}u\right) $, where $\zeta \in G_{n,k}$ and $u\in \zeta ^{\perp }$
(the orthogonal complement of $\zeta $ in $\mathbb{R}^{n}$).  The \textit{\ k-plane Radon-John
transform } of a function $f$ on $\mathbb{R}^{n}$ is defined by
\begin{equation}
(R_k f) (\tau )\equiv (R_k f) (\zeta \text{, }%
u)=\int_{\zeta }f(y+u)\,dy,  \label{3.1}
\end{equation}
where $dy$ is the induced Lebesque measure on the subspace \ $\zeta
\in G_{n,k}.$ \  This transform assigns to a function $f$ a
collection of integrals of $f$  over all $k$-planes in $\rn$. The
corresponding dual  transform of a function
$\varphi$ on $ \mathcal{G}_{n,k}$ is defined as the mean value of
$\varphi \left( \tau \right) $ over all $k$-planes $\tau $ through
$x\in \mathbb{R}^{n}$:
\be
(R_k^* \varphi)(x) =\int_{O(n)}\varphi (\mathcal{\sigma }\zeta
_{0}+x) \, d\mathcal{\sigma }, \qquad x\in \mathbb{R}^{n}.  \label{3.2}
\ee
Here $\zeta _{0}\in G_{n,k}$ \ is an arbitrary fixed $k-$plane
through the origin. If $f\in L_{p}(\mathbb{R}^{n}),$ then $R_k f$
is finite a.e. on $\mathcal{G}_{n,k}$ \ if and only if $1\leq
p<n/k$ \cite{So, Ru04b}.

A variety of  inversion procedures are known for $R_k f$; see \cite{ GGG, He, Ru04a, Ru04b}. One of the most important algorithms  relies on  the Fuglede
formula \cite{F},
\begin{equation}
R_k^* R_k  f=d_{k,n}I^{k}f, \qquad
d_{k,n}=\left( 2\pi \right) ^{k}\sigma _{n-k-1}/\sigma _{n-1},
\label{3.3}
\end{equation}
with the Riesz potential $I^{k}f$ on the right-hand side. Hence, Theorem A yields
 the following result. We denote
$$w(x)\!= \!(-\Del)^m [(1\!+\!|x|^2)^{m-(n+k)/2}], \quad m\!=\![(k\!+\!2)/2], \quad \tilde w(x)\!=\! (-\Del w)(x);$$
$$
 \lam_{k}=\frac{4^{m}\pi^{(n+k)/2}\Gam (n/2)\, \Gam (m -k/2)}{\Gam ((n -k)/2)\,\Gam ((n\!+\!k)/2-\!m)}, \qquad \del_{k}=
 (2m\!-\!k)\,\lam_{k}.
 $$
\noindent {\bf Theorem B.} {\it
 If $\varphi = R_k  f$, $f\in L^{p}(\bbr^n)$, $1\leq p<n/k$, then
\be \label {8rfhr} \lam_{k}\, f=\lim\limits_{t \to 0} \,(R_k^* \vp \ast t^{-k} w_t), \qquad  \del_{k}\, f=\int_0^\infty \frac{R_k^* \vp \ast  \tilde w_t}{t^{1+k}}\, dt, \ee
 where $\int_0^\infty (\ldots) =\lim\limits_{\e \to 0}\int_\e^\infty (\ldots)$.
The limit in both formulae exists in the $L^p$-norm and in the a.e. sense. If, moreover, $f\in C_0(\bbr^n)$, then it exists in the $\sup$-norm.}


\end{document}